\def\l{\ell}
\newtheorem{theorem}{Theorem}
\newtheorem{cor}[theorem]{Corollary}
\newtheorem{prop}[theorem]{Proposition}
\newtheorem{lemma}[theorem]{Lemma}
\newtheorem{definition}[theorem]{Definition}
\newtheorem{ex}[theorem]{Example}
\newcommand{\cpk}{{\rm cpk\,}}
\newcommand{\lpk}{{\rm lpk\,}}
\newcommand{\uprun}{{\rm uprun\,}}
\newcommand{\pk}{{\rm pk\,}}
\newcommand{\des}{{\rm des\,}}
\newcommand{\exc}{{\rm exc\,}}
\newcommand{\cyc}{{\rm cyc\,}}
\newcommand{\fix}{{\rm fix\,}}
\newcommand{\msn}{\mathfrak{S}_n}
\newcommand{\rss}{\mathcal{SS}}
\newcommand{\ms}{\mathfrak{S}}
\newcommand{\rs}{\mathcal{RS}}
\newcommand{\lrf}[1]{\lfloor #1\rfloor}
\newcommand{\lrc}[1]{\lceil #1\rceil}
\newcommand{\sgn}{{\rm sgn\,}}
\DeclareMathOperator{\R}{\mathbb{R}}
\newcommand{\rz}{{\rm RZ}}
\title{The peak statistics on simsun permutations}
\author[S.-M.~Ma]{Shi-Mei Ma}
\address{School of Mathematics and Statistics,
        Northeastern University at Qinhuangdao,
         Hebei 066004, P.R. China}
\email{shimeimapapers@163.com (S.-M. Ma)}
\author[Y.-N. Yeh]{Yeong-Nan Yeh}
\address{Institute of Mathematics,
        Academia Sinica, Taipei, Taiwan}
\email{mayeh@math.sinica.edu.tw (Y.-N. Yeh)}
\subjclass[2010]{Primary 05A05; Secondary 05A15}
\begin{document}

\maketitle
\begin{abstract}
In this paper, we study the relationship among left peaks,
interior peaks and up-down runs of simsun permutations. Properties of the generating polynomials, including the
recurrence relation, generating function and real-rootedness are studied. Moreover, we introduce and study
simsun permutations of the second kind.
\bigskip

\noindent{\sl Keywords}: Simsun permutations; Left peaks; Interior peaks; Alternating runs; Excedances
\end{abstract}
\date{\today}
\section{Introduction}
Let $\msn$ denote the symmetric group of all permutations of $[n]$, where $[n]=\{1,2,\ldots,n\}$.
Let $\pi=\pi(1)\pi(2)\cdots\pi(n)\in\msn$. A descent of $\pi$ is an index $i\in [n-1]$ such that $\pi(i)>\pi(i+1)$.
We say that $\pi$ has no {\it double descents} if there is no
index $i\in [n-2]$ such that $\pi(i)>\pi(i+1)>\pi(i+2)$.
The permutation $\pi$ is called {\it simsun} if for all $k$, the
subword of $\pi$ restricted to $[k]$ (in the order
they appear in $\pi$) contains no double descents. For example,
$35142$ is simsun, but $35241$ is not.
Simsun permutations are useful in describing the action of the symmetric group on the
maximal chains of the partition lattice (see~\cite{Sundaram1994,Sundaram1995}).
They are a variant of Andr\'e permutations that was introduced by Foata and Sch\"utzenberge~\cite{Foata73}.
There has been much recent work related to simsun permutations~(see~\cite{Branden11,Chow11,Deutsch12,Eu14,Foata01,Hetyei98} for instance).


Let $a_i(n)$ be the number of distinct $S_n$-orbits such that the stabiliser of a maximal chain in the orbit is conjugate to the Young subgroup
$S_2^i\times S_1^{n-2i}$. Following Sundaram~\cite[Theorem 3.2]{Sundaram1994}, the numbers $a_i(n)$ satisfy the recurrence relation
$$a_i(n+1)=ia_i(n)+(n-2i+2)a_{i-1}(n),$$
with initial conditions $a_0(1)=1=a_1(2)$, $a_0(n)=0$ for $n>1$ and $a_i(n)=0$ if $2i>n$.
Let $\rs_n$ be the set of simsun permutations of length $n$.
Simion and Sundaram~\cite[p.~267]{Sundaram1994} discovered that $a_i(n)$ is the number of permutations in $\rs_{n-2}$ with $i-1$ descents and $\#\rs_n=E_{n+1}$,
where $E_{n}$ is the $n$th Euler number, which also is the number alternating permutations in $\msn$.

The descent number of $\pi\in\msn$ is defined by $\des(\pi)=\#\{i\in [n-1]: \pi(i)>\pi(i+1)\}$.
Let $S(n,k)=\#\{\pi\in\rs_n: \des(\pi)=k\}$.
We define
$S_n(x)=\sum_{k=0}^{\lrf{n/2}}S(n,k)x^k$.
Then the numbers $S(n,k)$ satisfy the recurrence relation
\begin{equation}\label{Snk-recurrence}
S(n,k)=(k+1)S(n-1,k)+(n-2k+1)S(n-1,k-1),
\end{equation}
with the initial conditions $S(0,0)=1$ and $S(0,k)=0$ for $k\ge 1$, which is equivalent to
\begin{equation}\label{Snx-recu}
S_{n+1}(x)=(1+nx)S_n(x)+x(1-2x)S_n'(x),
\end{equation}
with $S_0(x)=1$. Let $S(x,z)=\sum_{n\geq 0}S_n(x)\frac{z^n}{n!}$.
Chow and Shiu~\cite[Theorem~1]{Chow11} obtained that
\begin{equation}\label{Sxz-expon}
S(x,z)=\left(\frac{\sqrt{2x-1}\sec\left(\frac{z}{2}\sqrt{2x-1}\right)}
{\sqrt{2x-1}-\tan\left(\frac{z}{2}\sqrt{2x-1}\right)}\right)^2.
\end{equation}
For convenience, here we list the first few terms of $S_n(x)$:
\begin{align*}
S_1(x)&=1,
S_2(x)=1+x,
S_3(x)=1+4x,
S_4(x)=1+11x+4x^2,
S_5(x)=1+26x+34x^2.
\end{align*}

%
The number of peaks of permutations is certainly among the most important combinatorial statistics.
See, e.g.,~\cite{Billey13,Ma121,Ma123} and the references therein.
A {\it left peak} in $\pi$ is an index $i\in[n-1]$ such that $\pi(i-1)<\pi(i)>\pi(i+1)$, where we take $\pi(0)=0$.
Let $\lpk(\pi)$ denote the number of
left peaks in $\pi$. For example, $\lpk(21435)=2$.
Sundaram discovered that $a_i(n)$ is also the number of Andr\'e permutations in $\ms_{n-1}$ with $i$ left peaks (see~\cite[p.~175]{Sundaram1996}).
In fact, since any descent of a simsun permutation is a left peak, we have
\begin{equation*}\label{Snxlpk}
S_n(x)=\sum_{\pi\in\rs_n}x^{\lpk(\pi)}.
\end{equation*}

Let $\widehat{W}(n,k)=\#\{\pi\in\msn: \lpk(\pi)=k\}$.
Let
$\widehat{W}_n(x)=\sum_{k\geq0}\widehat{W}({n,k})x^k$.
The polynomials $\widetilde{W}_n(x)$ satisfy the recurrence relation
\begin{equation*}\label{Wnkx-recurrence}
\widehat{W}_{n+1}(x)=(1+nx)\widehat{W}_n(x)+2x(1-x)\widehat{W}_n'(x),
\end{equation*}
with initial conditions $\widehat{W}_0(x)=\widehat{W}_1(x)=1$ (see~\cite[A008971]{Sloane}).
It is well known~\cite[A008971]{Sloane} that 
\begin{equation}\label{Wxz-expon}
\widehat{W}(x,z)=\sum_{n\geq 0}\widehat{W}_n(x)\frac{z^n}{n!}
=\frac{\sqrt{1-x}}{\sqrt{1-x}\cosh(z\sqrt{1-x})-\sinh(z\sqrt{1-x})}.
\end{equation}
By comparing~\eqref{Sxz-expon} with~\eqref{Wxz-expon}, we observe that
$S(x,z)=\widehat{W}\left(2x,z/2\right)^2$,
which leads to the following formula:
\begin{equation}\label{SnxWnx}
S_n(x)=\frac{1}{2^n}\sum_{k=0}^n\binom{n}{k}\widehat{W}_k(2x)\widehat{W}_{n-k}(2x).
\end{equation}

Denote by $B_n$ the hyperoctahedral group of rank $n$. Elements $\pi$ of $B_n$ are signed permutations of the set $\pm[n]$ such that $\pi(-i)=-\pi(i)$ for all $i$, where $\pm[n]=\{\pm1,\pm2,\ldots,\pm n\}$.
A {\it snake} of type $B_n$ is a signed permutation $\pi(1)\pi(2)\cdots\pi(n)\in\ B_n$ such that $0<\pi(1)>\pi(2)<\cdots\pi(n)$.
The $n$th {\it Springer number} $S_n$ is the number of snakes of of type $B_n$.
Springer~\cite{Springer} derived the following generating function:
$$\sum_{n\geq 0}S_n\frac{z^n}{n!}=\frac{1}{\cos z-\sin z},$$
which equals $\widehat{W}(2,z)$. As a special case of~\eqref{SnxWnx}, we get
$$E_{n+1}=\frac{1}{2^n}\sum_{k=0}^n\binom{n}{k}S_kS_{n-k}.$$
We refer the reader to~\cite{Chen11} for various structures related to Springer numbers.
Motivated by~\eqref{SnxWnx}, it is natural to study peak statistics on simsun permutations.

This paper is organized as follows.
In Section~\ref{Section-2}, we give a constructive proof of a connection
between $S(n,k)$ and the number of permutations in $\ms_{n+1}$ with $k$ interior peaks.
In Section~\ref{Section-3}, we count simsun permutations by their interior peaks. In Section~\ref{Section-4}, we count simsun permutations by their up-down runs. In Section~\ref{Section-5}, we introduce
simsun permutations of the second kind.
\section{Relationship to permutations with a given number of interior peaks}\label{Section-2}
We first recall some basic definitions of peak statistics.
An {\it interior peak} in $\pi$ is an index $i\in\{2,3,\ldots,n-1\}$
such that $\pi(i-1)<\pi(i)>\pi(i+1)$ (see~\cite[A008303]{Sloane}). Let $\pk(\pi)$ denote the number of interior peaks
in $\pi$. Let $W(n,k)=\#\{\pi\in\msn: \pk(\pi)=k\}$.
We define $W_n(x)=\sum_{k\geq 0}W(n,k)x^k$.
The polynomials $W_n(x)$ satisfy the recurrence relation
\begin{equation*}
W_{n+1}(x)=(nx-x+2)W_n(x)+2x(1-x)W'_n(x),
\end{equation*}
with initial conditions $W_1(x)=1,W_2(x)=2$ and $W_3(x)=4+2x$.
We say that $\pi$ changes direction at position $i$ if either $\pi({i-1})<\pi(i)>\pi(i+1)$, or
$\pi(i-1)>\pi(i)<\pi(i+1)$, where $i\in\{2,3,\ldots,n-1\}$. We say that $\pi$ has $k$ {\it alternating runs} if there are $k-1$ indices $i$ where $\pi$ changes direction~(see~\cite[A059427]{Sloane}).
Let $R(n,k)$ denote the number of permutations in $\msn$ with $k$ alternating runs and let $R_n(x)=\sum_{k=1}^{n-1}R(n,k)x^k$.
The alternating runs of permutations was first studied by Andr\'e~\cite{Andre84} and he
showed that $R(n,k)$ satisfies the following recurrence relation
\begin{equation*}\label{rnk-recurrence}
R(n,k)=kR(n-1,k)+2R(n-1,k-1)+(n-k)R(n-1,k-2)
\end{equation*}
for $n,k\geqslant 1$, where $R(1,0)=1$ and $R(1,k)=0$ for $k\geqslant 1$.

Let $d(n,k)$ denote the number of increasing 1-2 trees on $[n]$ with $k$ leaves (see~\cite[A094503]{Sloane}).
Let $D_n(x)=\sum_{k\geq 1}d(n,k)x^k$. It follows from~\cite[Proposition 4]{Chow11} that
$D_{n+1}(x)=xS_{n}(x)$ for $n\ge 0$.
Using~\cite[Corollary 2,~Theorem 11]{Ma123}, we get
\begin{equation}\label{Rnx-Wnx}
R_n(x)=\frac{x(1+x)^{n-2}}{2^{n-2}}W_n\left(\frac{2x}{1+x}\right)=2x(1+x)^{n-2}S_{n-1}\left(\frac{x}{1+x}\right)\quad\textrm{for $n\ge 2$}.
\end{equation}
Therefore, combining~\eqref{Rnx-Wnx} and~\cite[Eq.~(13)]{Ma122}, we get the following result.
\begin{prop}
For $n\geq 1$ and $0\leq k\leq \lrf{\frac{n}{2}}$, we have
\begin{equation}\label{WnkSnk}
W(n+1,k)=2^{n-k}S(n,k).
\end{equation}
Moreover, we have
$$S_n(x)=\frac{1}{2^{n+1}x}\sum_{k=0}^{\lrf{{n}/{2}}+1}p(n+1,n-2k+2)(2x-1)^k$$
for $n\geq 1$,
where
\begin{equation*}
p(n,n-2k+1)=(-1)^{k}\sum_{i\geq 1}i!{n \brace i}(-2)^{n-i}\left[\binom{i}{n-2k}-\binom{i}{n-2k+1}\right].
\end{equation*}
\end{prop}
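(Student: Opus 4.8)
The plan is to derive both parts of the proposition from the two displayed identities that the excerpt has just established, namely the relation~\eqref{Rnx-Wnx} connecting $R_n(x)$, $W_n(x)$, and $S_{n-1}(x)$, together with the cited formula from~\cite[Eq.~(13)]{Ma122}. The first claim~\eqref{WnkSnk}, $W(n+1,k)=2^{n-k}S(n,k)$, is the coefficient-level shadow of the generating-polynomial identity $W_{n+1}(x)=2^{n}S_n(x/2)$ that one extracts from the middle and right-hand expressions in~\eqref{Rnx-Wnx}. So **first I would** equate the two descriptions of $R_n(x)$ in~\eqref{Rnx-Wnx} to isolate a clean relation between $W_n$ and $S_{n-1}$, substitute $n+1$ for $n$ to line up indices, and then read off the coefficient of $x^k$ on each side. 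On the left this is $W(n+1,k)$; on the right, the substitution $x\mapsto x/2$ in $S_n(x/2)$ scaled by $2^n$ turns the coefficient $S(n,k)$ into $2^n\cdot 2^{-k}S(n,k)=2^{n-k}S(n,k)$, which is exactly~\eqref{WnkSnk}.

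**Next**, for the inversion formula I would start from~\eqref{WnkSnk} read in the polynomial form $W_{n+1}(x)=2^nS_n(x/2)$, or equivalently $S_n(x)=2^{-n}W_{n+1}(2x)$, and feed in the expansion of $W_{n+1}(x)$ supplied by~\cite[Eq.~(13)]{Ma122}. The quantities $p(n+1,\,n-2k+2)$ are precisely the connection coefficients expressing $W_{n+1}(x)$ (or its peak polynomial) in the basis of powers of $(2x-1)$; the cited equation gives the explicit Stirling-number formula for $p(n,n-2k+1)$ that appears verbatim in the statement. So the task reduces to **substituting** $x\mapsto 2x$ into that expansion, dividing by $2^n$, and bookkeeping the shift from index $n$ to $n+1$ and from $n-2k+1$ to $n-2k+2$ in the second argument of $p$. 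The factor $1/(2^{n+1}x)$ out front absorbs the overall normalization $2^{-n}$ together with one extra power of $2$ and a power of $x$ that arises because the natural peak-polynomial expansion of~\cite{Ma122} carries a leading factor of $x$ (reflecting that $W_{n+1}$ has a simple root structure that makes $W_{n+1}(x)/x$, or the relevant shifted polynomial, the object that expands cleanly in $(2x-1)^k$).

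**The main obstacle** I anticipate is purely bookkeeping rather than conceptual: getting the index conventions in $p(n+1,n-2k+2)$ to agree exactly with the raw output of~\cite[Eq.~(13)]{Ma122}, which is stated for $p(n,n-2k+1)$. One must check that the parity shift (odd second argument $n-2k+1$ versus the even-looking $n-2k+2$ in the summand) and the range $0\le k\le \lfloor n/2\rfloor+1$ are consistent with the degree of $S_n(x)$, namely $\lfloor n/2\rfloor$, so that no spurious terms survive and the prefactor $1/(2^{n+1}x)$ is forced rather than chosen. I would verify the whole formula against the small cases already tabulated in the excerpt — for instance $S_4(x)=1+11x+4x^2$ and $S_5(x)=1+26x+34x^2$ — by computing the right-hand side for $n=4,5$ and confirming the Stirling-number sums reproduce these coefficients. **The remaining steps** are then routine algebraic substitution, and the verification on small cases serves both as a sanity check on the constants and as the cleanest way to pin down the exact normalization.
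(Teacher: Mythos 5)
Your proposal is correct and takes essentially the same route as the paper, whose entire proof is the one-line remark that the result follows by combining~\eqref{Rnx-Wnx} with~\cite[Eq.~(13)]{Ma122}: equating the two closed forms in~\eqref{Rnx-Wnx} gives exactly your polynomial identity $W_{n+1}(2x)=2^{n}S_n(x)$, whence~\eqref{WnkSnk} by reading off coefficients, and substituting $x\mapsto 2x$ in the cited expansion of the peak polynomial (which indeed carries the leading factor of $x$ you anticipated, so that $2x\cdot 2^{n}S_n(x)$ equals the sum over powers of $(2x-1)$) yields the stated formula with the prefactor $\frac{1}{2^{n+1}x}$ forced, as you say. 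Your index bookkeeping and small-case checks are exactly the details the paper leaves implicit.
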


In the rest of this section, we give a constructive proof of~\eqref{WnkSnk}.
Let $$D(\pi)=\{i\in[n-2]: \pi(i)>\pi(i+1)\}$$ be the descent set of $\pi\in\rs_{n}$. Set $\overleftarrow{D}(\pi)=\{i-1: i\in D(\pi)\}$.
It should be noted that if we get a permutation $\pi'\in\rs_{n+1}$ from a permutation $\pi\in\rs_{n}$ by inserting the entry $n+1$ into $\pi$, then the entry $n+1$ can not be inserted right after $\pi(j)$, where $j\in \overleftarrow{D}(\pi)$.
In this section, we always assume that permutations in $\rs_{n}$ are prepended by 0.
That is, we identify a permutation $\pi(1)\pi(2)\cdots\pi(n)\in\rs_n$ with the word $\pi(0)\pi(1)\pi(2)\cdots\pi(n)$, where $\pi(0)=0$.
Define $\rs_{n,k}=\{\pi\in\rs_n \mid \des(\pi)=k\}$.
We can now introduce a definition of {\it labeled simsun permutations}.
\begin{definition}\label{def01}
Let $\pi\in\rs_{n,k}$.
Suppose $i_1<i_2<\cdots<i_k$ are elements of $D(\pi)$. Then we put the superscript label $x_{r}$
right after $\pi(i_{r})$, where $1\leq r\leq k$.
 If $j_1<j_2<\cdots<j_{n-2k}$ are elements of the set $\{0,1,2,\ldots,n-1\}\setminus(D(\pi)\cup \overleftarrow{D}(\pi))$,
then we put the superscript label $y_{s}$
right after $\pi(j_s)$, where $1\leq s\leq n-2k$.
\end{definition}

Let $\ms_{n,k}=\{\pi\in\msn \mid \pk(\pi)=k\}$. We introduce a definition of {\it labeled permutations}.
\begin{definition}
Let $\pi\in\ms_{n,k}$.
Suppose $i_1<i_2<\cdots<i_k$ are indices of the interior peaks of $\pi$. Then we put the superscript labels $p_r$
immediately before and right after $\pi(i_{r})$, where $1\leq r\leq k$.
 If $j_1<j_2<\cdots<j_{n-2k-1}$ are elements of the set
 $\{1,2,3,\ldots,n-1\}\setminus (\{i_1,i_2,\ldots,i_k\}\cup \{i_1-1,i_2-1,\ldots,i_k-1\})$,
then we put the superscript label $q_{s}$
right after $\pi(j_s)$, where $1\leq s\leq n-2k-1$.
\end{definition}

In the following discussion, we always add labels to permutations in $\rs_{n,k}$ and $\ms_{n,k}$.
As an example, for $\pi=34125$, if we say that $\pi\in \rs_{5,1}$, then the labels of $\pi$
is given by $^{y_1}34^{x_1}1^{y_2}2^{y_3}5$; if we say that $\pi\in\ms_{5,1}$, then the labels of $\pi$
is given by $3^{p_1}4^{p_1}1^{q_1}2^{q_2}5$.

Now we construct a correspondence, denoted by $\Phi$, between $\rs_{n,k}$ and $\ms_{n+1,k}$.
When $n=1$, the correspondence between $\rs_{1,0}$ and $\ms_{2,0}$ is given by
$$^{y_1}1 \xlongleftrightarrow{\Phi} \{1^{q_1}2,2^{q_1}1\}.$$
When $n=2$, the correspondence between $\rs_{2,k}$ and $\ms_{3,k}$ is given by
\begin{align*}
^{y_1}1^{y_2}2&\xlongleftrightarrow{\Phi} \{1^{q_1}2^{q_2}3,3^{q_1}1^{q_2}2,2^{q_1}1^{q_2}3,3^{q_1}2^{q_2}1\};\\
2^{x_1}1& \xlongleftrightarrow{\Phi} \{1^{p_1}3^{p_1}2,2^{p_1}3^{p_1}1\}.
\end{align*}
Let $n=m$. Suppose $\Phi$ is a correspondence between $\rs_{m,k}$ and $\ms_{m+1,k}$ for all $k$.
More precisely, given an element $\pi\in\rs_{m,k}$. Suppose we have the correspondence
\begin{align*}
\pi&\xlongleftrightarrow{\Phi}\{\sigma_1,\sigma_2,\ldots,\sigma_{2^{m-k}}\},
\end{align*}
where $\sigma_i\in\ms_{m+1,k}$ for $1\leq i\leq 2^{m-k}$.
Consider the case $n=m+1$.
Suppose $\widehat{\pi}\in\rs_{m+1}$ is obtained from $\pi$ by inserting the entry $m+1$ into $\pi$.
We distinguish three cases:
\begin{enumerate}
  \item [\rm ($i$)] If $\widehat{\pi}(m+1)=m+1$, then we insert the entry $m+2$ at the front or at the end of each $\sigma_i$.
  In this case, the obtained elements in $\Phi(\widehat{\pi})$ all have $k$ interior peaks. Therefore, we get $2\cdot 2^{m-k}=2^{m+1-k}$ elements in $\ms_{m+2,k}$.
 \item [\rm ($ii$)] If the entry $m+1$ is inserted to the position of $\pi$ with label $x_r$,
 then we insert the entry $m+2$ to one of the positions of each $\sigma_i$ with label $p_r$. In this case, $\des(\widehat{\pi})=k$ and we get $2\cdot 2^{m-k}=2^{m+1-k}$ elements in $\ms_{m+2,k}$.
 \item [\rm ($iii$)] If the entry $m+1$ is inserted to the position of $\pi$ with label $y_s$,
 then we insert the entry $m+2$ to the position of each $\sigma_i$ with label $q_s$.
 In this case, $\des(\widehat{\pi})=k+1$ and we get $2^{m-k}=2^{(m+1)-(k+1)}$ elements in $\ms_{m+2,k+1}$.
  \end{enumerate}
It is straightforward to show that each labeled permutation in $\Phi(\widehat{\pi})$ will be obtained exactly once in this way.
Conversely, given an element $\tau$ of $\ms_{m+2,k}$. Removing the entry $m+2$ of $\tau$, we can find the position of the largest entry of the corresponding
simsun permutation in $\rs_{m+1}$.
As illustrated in example~\ref{example01}, we can get an unique element of
$\rs_{m+1}$ by repeatedly removing the largest entry.
By induction, we see that $\Phi$ is the desired correspondence between $\rs_{m,k}$ and $\ms_{m+1,k}$, which also
gives a constructive proof of~\eqref{WnkSnk}.

\begin{ex}\label{example01}
Given $\pi=3412\in\rs_{4,1}$. The correspondence between $\pi$ and $\Phi(\pi)$ is built up as
follows:
\begin{align*}
^{y_1}1~&\xlongleftrightarrow{\Phi} \{1^{q_1}2,2^{q_1}1\};\\
^{y_1}1^{y_2}2&\xlongleftrightarrow{\Phi} \{1^{q_1}2^{q_2}3,3^{q_1}1^{q_2}2,2^{q_1}1^{q_2}3,3^{q_1}2^{q_2}1\};\\
3^{x_1}1^{y_1}2~&\xlongleftrightarrow{\Phi} \{1^{p_1}4^{p_1}2^{q_1}3,3^{p_1}4^{p_1}1^{q_1}2,2^{p_1}4^{p_1}1^{q_1}3,3^{p_1}4^{p_1}2^{q_1}1\};\\
^{y_1}34^{x_1}1^{y_2}2~&\xlongleftrightarrow{\Phi} \{1^{p_1}5^{p_1}4^{q_1}2^{q_2}3,3^{p_1}5^{p_1}4^{q_1}1^{q_2}2,2^{p_1}5^{p_1}4^{q_1}1^{q_2}3,3^{p_1}5^{p_1}4^{q_1}2^{q_2}1,\\
     ~~~~~~~&1^{q_1}4^{p_1}5^{p_1}2^{q_2}3,3^{q_1}4^{p_1}5^{p_1}1^{q_2}2,2^{q_1}4^{p_1}5^{p_1}1^{q_2}3,3^{q_1}4^{p_1}5^{p_1}2^{q_2}1\}.
\end{align*}
\end{ex}
\section{Interior peaks of simsun permutations}\label{Section-3}
Let $\rs_n^{+}=\{\pi\in\rs_n: \pi(1)>\pi(2)\}$ and $\rs_n^{-}=\{\pi\in\rs_n: \pi(1)<\pi(2)\}$.
For $\pi\in\rs_n^{+}$, we have $\lpk(\pi)=\pk(\pi)+1$. While for $\pi\in\rs_n^{-}$, we have
$\lpk(\pi)=\pk(\pi)$.

We define
\begin{align*}
P_n(x)&=\sum_{\pi\in\rs_n}x^{\pk(\pi)}=\sum_{k\geq 0} P(n,k)x^k,\\
P^{+}_n(x)&=\sum_{\pi\in\rs_n^{+}}x^{\pk(\pi)}=\sum_{k\geq 0} P^{+}(n,k)x^k, \\
P^{-}_n(x)&=\sum_{\pi\in\rs_n^{-}}x^{\pk(\pi)}=\sum_{k\geq 0} P^{-}(n,k)x^k.
\end{align*}

The following lemma is a fundamental result.
\begin{lemma}\label{lemma01}
For $n\geq 2$, we have
\begin{equation}\label{Pnk-recu01}
P^+(n+1,k)=(k+1)P^+(n,k)+(n-2k)P^+(n,k-1)+P^-(n,k),
\end{equation}
\begin{equation}\label{Pnk-recu02}
P^-(n+1,k)=(k+1)P^-(n,k)+(n-2k+1)P^-(n,k-1)+P^+(n,k-1).
\end{equation}
Equivalently,
the polynomials $P_{n}^+(x)$ and $P_{n}^-(x)$ satisfy the following recurrence relations
\begin{align*}
P_{n+1}^+(x)&=((n-2)x+1)P_{n}^+(x)+x(1-2x)\frac{d}{dx}P_{n}^+(x)+P_{n}^-(x),\\
P_{n+1}^-(x)&=((n-1)x+1)P_{n}^-(x)+x(1-2x)\frac{d}{dx}P_{n}^-(x)+xP_{n}^+(x).
\end{align*}
\end{lemma}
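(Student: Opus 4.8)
The plan is to prove both recurrences simultaneously by the standard device of building each $\pi'\in\rs_{n+1}$ from a unique $\pi\in\rs_n$ by inserting the largest letter $n+1$, while keeping track of two data: the \emph{type} of the permutation (whether it lies in $\rs_n^{+}$ or $\rs_n^{-}$, decided by comparing the first two letters) and the number of interior peaks. I would use three structural facts. First, since the restriction of a simsun permutation to $[n]$ is the whole word, a simsun permutation has no double descent at all; hence a descent at a position $i\ge 2$ is automatically an interior peak (the preceding letter must be smaller, or we would have a double descent), while a descent at position $1$ occurs exactly when $\pi\in\rs_n^{+}$. This gives $\des(\pi)=\pk(\pi)+1$ for $\pi\in\rs_n^{+}$ and $\des(\pi)=\pk(\pi)$ for $\pi\in\rs_n^{-}$. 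Second, inserting $n+1$ immediately before a descent of $\pi$ creates a double descent, so the admissible slots are exactly those not lying in $\overleftarrow{D}(\pi)$; consequently the number of admissible slots is $n-\pk(\pi)$ for $\pi\in\rs_n^{+}$ and $n+1-\pk(\pi)$ for $\pi\in\rs_n^{-}$. Third, because $n+1$ is largest, it becomes an interior peak precisely when placed in an interior slot, and any older letter that becomes adjacent to $n+1$ loses peak status, since no letter exceeds $n+1$.

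Next I would classify the admissible slots of a fixed $\pi$ with $\pk(\pi)=k$ by their effect on the two statistics. Placing $n+1$ at the end preserves both type and peak number. Placing it at the front is admissible only when $\pi\in\rs_n^{-}$, and then sends $\pi$ into $\rs_{n+1}^{+}$ with the peak count unchanged. For each of the $k$ interior peaks $j$ of $\pi$, the slot just after $\pi(j)$ is admissible — the no-double-descent property forces an ascent right after a peak — and inserting there is peak-preserving and type-preserving: the old peak at $\pi(j)$ is destroyed while a new peak is created at $n+1$. The remaining admissible slots are interior, non-peak slots, each raising the peak count by one; inserting after the first letter flips the type to $\rs^{-}$ (and for $\pi\in\rs_n^{+}$ this slot is always admissible, since the descent at position $1$ forces an ascent at position $2$), while inserting after a letter in position $\ge 2$ preserves the type.

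Combining these with the slot counts, a fixed $\pi\in\rs_n^{+}$ with $\pk=k$ contributes $k+1$ peak-preserving permutations to $\rs_{n+1}^{+}$, one type-flipping peak-raising permutation to $\rs_{n+1}^{-}$, and $n-2k-2$ peak-raising permutations to $\rs_{n+1}^{+}$; a fixed $\pi\in\rs_n^{-}$ with $\pk=k$ contributes $k+1$ peak-preserving permutations to $\rs_{n+1}^{-}$, one to $\rs_{n+1}^{+}$ via the front slot, and $n-2k-1$ peak-raising permutations to $\rs_{n+1}^{-}$. Summing over $\pi$ (and replacing $k$ by $k-1$ in the peak-raising contributions) yields exactly \eqref{Pnk-recu01} and \eqref{Pnk-recu02}. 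I expect the delicate point to be the bookkeeping of the peak-preserving insertions at old peaks — recognising the cancellation between the destroyed and the created peak — together with the two type-flipping slots; the asymmetry $n-2k$ versus $n-2k+1$ in the two recurrences is precisely the shadow of $\des(\pi)=\pk(\pi)+[\pi\in\rs_n^{+}]$ acting through the differing admissible-slot counts. Finally, the equivalent differential recurrences follow by multiplying each identity by $x^k$ and summing over $k$, using $\sum_k kP(n,k)x^k=x\frac{d}{dx}P_n(x)$; I would also check the base case $n=2$ directly, where $P_2^{+}(x)=P_2^{-}(x)=1$.
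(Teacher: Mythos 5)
Your proposal is correct and takes essentially the same approach as the paper: build $\rs_{n+1}$ from $\rs_n$ by inserting the largest entry $n+1$ into the admissible slots (those not immediately preceding a descent), and classify each slot by its effect on the sign of the permutation and on the interior-peak count. The paper's own proof is a terser version of this very argument---it spells out only the first recurrence via three cases and declares the second analogous---so your fuller bookkeeping (the slot count $n+1-\des(\pi)$, the peak-preserving insertions after old peaks, and the type-flipping slots at the front and after the first letter) simply makes explicit what the paper leaves implicit.
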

\begin{proof}
We now prove~\eqref{Pnk-recu01}.
There are three ways we can get a permutation $\pi'\in\rs_{n+1}^+$ with $k$ interior peaks from a permutation $\pi\in\rs_{n}$
by inserting the entry $n+1$ into $\pi$:
\begin{enumerate}
\item [(a)] If $\pi\in \rs_n^+$ and $\pk(\pi)=k$, then we can insert the entry $n+1$ right after an interior peak of $\pi$ or put the entry $n+1$ at the end of $\pi$.
As we have $P^+(n,k)$ choices for $\pi$, this accounts for $(k+1)P^+(n,k)$ possibilities.
\item [(b)] If $\pi\in \rs_n^+$ and $\pk(\pi)=k-1$, then there are $n-2k$ positions could be inserted the entry $n+1$, since we cann't insert the entry $n+1$ immediately before or right after each left peak of $\pi$.
 As we have $P^+(n,k-1)$ choices for $\pi$, this accounts for $(n-2k)P^+(n,k-1)$ possibilities.
 \item [(c)] If $\pi\in \rs_n^-$ and $\pk(\pi)=k$, then we have to put the entry $n+1$ at the front of $\pi$.
 \end{enumerate}
This completes the proof of~\eqref{Pnk-recu01}. In the same way, one can get~\eqref{Pnk-recu02}.
\end{proof}

The first few terms of the $P_n(x),P^{+}_n(x)$ and $P^{-}_n(x)$ are respectively given as follows:
\begin{align*}
P_1(x)& =1,P_2(x) =2,P_3(x)=3+2x,P_4(x)=4+12x,P_5(x)=5+44x+12x^2;\\
P^{+}_1(x)& =1,P^{+}_2(x)=1,P^{+}_3(x)=2,P^{+}_4(x)=3+4x,P^{+}_5(x)=4+22x;\\
P^{-}_1(x)& =1,P^{-}_2(x) =1,P^{-}_3(x)=1+2x,P^{-}_4(x)=1+8x,P^{-}_5(x)=1+22x+12x^2.
\end{align*}
By Lemma~\ref{lemma01}, it is easy to deduce that $$\deg P^{+}_n(x)=\lrf{{(n-2)}/{2}},~\deg P_n(x)=\deg P^{-}_n(x)=\lrf{{(n-1)}/{2}}.$$

\begin{lemma}\label{lemma02}
For $n\geq 1$, we have
\begin{equation}\label{PnkSnk01}
P^{+}(n+1,k)=(n-2k)S(n,k),~P^{-}(n+1,k)=(1+k)S(n,k).
\end{equation}
Equivalently,
\begin{equation}\label{PnkxSnkx01}
P_{n+1}^+(x)=nS_n(x)-2xS_n'(x),~P_{n+1}^-(x)=S_n(x)+xS_n'(x).
\end{equation}
\end{lemma}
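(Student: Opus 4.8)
The plan is to prove the coefficient identities \eqref{PnkSnk01} directly by induction on $n$, using the two recurrences \eqref{Pnk-recu01} and \eqref{Pnk-recu02} from Lemma~\ref{lemma01} together with the recurrence \eqref{Snk-recurrence} for $S(n,k)$. Once \eqref{PnkSnk01} is established, the polynomial identities \eqref{PnkxSnkx01} follow at once: multiplying $P^+(n+1,k)=(n-2k)S(n,k)$ by $x^k$ and summing gives $P_{n+1}^+(x)=n\sum_k S(n,k)x^k-2\sum_k kS(n,k)x^k=nS_n(x)-2xS_n'(x)$, and the analogous computation with $P^-(n+1,k)=(1+k)S(n,k)$ yields $P_{n+1}^-(x)=S_n(x)+xS_n'(x)$, since $\sum_k kS(n,k)x^k=xS_n'(x)$.

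For the base case $n=1$ we have $S_1(x)=1$, so $S(1,0)=1$ and $S(1,k)=0$ for $k\ge1$; the proposed formulas then give $P^+(2,0)=1$, $P^-(2,0)=1$ and all higher coefficients zero, which matches $P_2^+(x)=P_2^-(x)=1$. For the inductive step I would assume \eqref{PnkSnk01} at level $n$ and substitute the resulting expressions for $P^{\pm}(n+1,k)$ and $P^{\pm}(n+1,k-1)$ into the recurrences of Lemma~\ref{lemma01} with $n$ replaced by $n+1$. In the $P^+$ case this turns \eqref{Pnk-recu01} into
\[P^+(n+2,k)=(k+1)(n-2k)S(n,k)+(n+1-2k)(n-2k+2)S(n,k-1)+(1+k)S(n,k).\]
Collecting the two $S(n,k)$ terms gives $(k+1)(n-2k+1)S(n,k)=(k+1)(n+1-2k)S(n,k)$, so the right-hand side equals $(n+1-2k)\bigl[(k+1)S(n,k)+(n-2k+2)S(n,k-1)\bigr]$, which is exactly $(n+1-2k)S(n+1,k)$ by \eqref{Snk-recurrence}, the desired value of $P^+(n+2,k)$.

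The $P^-$ case is entirely parallel: substituting into \eqref{Pnk-recu02} with $n$ replaced by $n+1$ produces $(k+1)^2S(n,k)+(n-2k+2)(k+1)S(n,k-1)$, which factors as $(1+k)\bigl[(k+1)S(n,k)+(n-2k+2)S(n,k-1)\bigr]=(1+k)S(n+1,k)$ again by \eqref{Snk-recurrence}. The only real work is bookkeeping: one must track the index shift carefully so that the coefficients $(n+1-2k)$ and $(n-2k+2)$ appearing after the substitution align exactly with those in the $S$-recurrence. There is no genuine obstacle, since everything reduces to the arithmetic identities $n-2k+1=n+1-2k$ and $(n+1)-2k+1=n-2k+2$ together with the matching of the two products of linear factors; the boundary cases in $k$ are handled automatically by the conventions $S(n,k)=0$ for $k<0$.
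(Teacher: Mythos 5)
Your proposal is correct and takes essentially the same approach as the paper: induction on $n$, substituting the inductive hypothesis into the recurrences of Lemma~\ref{lemma01} and simplifying via \eqref{Snk-recurrence}. The only differences are cosmetic --- you factor out $(n+1-2k)$ (resp. $(1+k)$) directly rather than eliminating $(k+1)S(n,k)$ the way the paper does, and you write out the $P^{-}$ case, which the paper leaves to the reader with ``along the same lines.''
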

\begin{proof}
We prove~\eqref{PnkSnk01} by induction. If $n=1$, the result
is obvious, so we proceed to the inductive step.
Suppose the result holds for $n=m$. For $n=m+1$, combining~\eqref{Snk-recurrence} and~\eqref{Pnk-recu01}, we have
\begin{align*}
P^{+}(m+2,k)&=(k+1)P^+(m+1,k)+(m+1-2k)P^+(m+1,k-1)+P^-(m+1,k)\\
&=(k+1)(m-2k)S(m,k)+(m+1-2k)(m+2-2k)S(m,k-1)+(k+1)S(m,k)\\
&=(k+1)(m-2k)S(m,k)+(m+1-2k)(m+2-2k)S(m,k-1)+\\
&S(m+1,k)-(m+2-2k)S(m,k-1)\\
&=(m-2k)[(k+1)S(m,k)+(m+2-2k)S(m,k-1)]+S(m+1,k)\\
&=(m-2k)S(m+1,k)+S(m+1,k)\\
&=(m+1-2k)S(m+1,k).
\end{align*}
Along the same lines, one can prove $P^{-}(n+1,k)=(1+k)S(n,k)$.
\end{proof}

It is clear that $P(n,k)=P^+(n,k)+P^{-}(n,k)$ for $n\geq 2$. We can now conclude the following result from the discussion above.
\begin{theorem}
For $n\geq 1$, we have
\begin{equation}\label{PnkSnk02}
P(n+1,k)=(n+1-k)S(n,k),
\end{equation}
or equivalently,
\begin{equation}\label{PnkxSnkx02}
P_{n+1}(x)=(n+1)S_n(x)-xS_n'(x).
\end{equation}
Furthermore, we have
\begin{equation}\label{pnk-recurrence}
P(n+1,k)=\frac{(k+1)(n-k+1)}{n-k}P(n,k)+(n-2k+1)P(n,k-1)
\end{equation}
for $0\leq k\leq \lrf{n/2}$.
In particular, $P(n,0)=n$ and $P(n,1)=(n-1)(2^{n-1}-n)$ for $n\geq 1$.
\end{theorem}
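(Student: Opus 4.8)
The plan is to obtain every assertion as a direct consequence of Lemma~\ref{lemma02} together with the $S$-recurrence~\eqref{Snk-recurrence}, so the argument is essentially bookkeeping once those tools are in hand. I would first establish the closed form~\eqref{PnkSnk02}. Since $P(n+1,k)=P^+(n+1,k)+P^-(n+1,k)$ for $n\ge 1$, adding the two identities in~\eqref{PnkSnk01} gives
\[
P(n+1,k)=(n-2k)S(n,k)+(1+k)S(n,k)=(n+1-k)S(n,k).
\]
The polynomial form~\eqref{PnkxSnkx02} is merely a restatement: multiplying by $x^k$, summing over $k$, and using $xS_n'(x)=\sum_k kS(n,k)x^k$ yields $P_{n+1}(x)=(n+1)S_n(x)-xS_n'(x)$; equivalently one just adds the two polynomial identities in~\eqref{PnkxSnkx01}.

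For the recurrence~\eqref{pnk-recurrence} I would invert the closed form. Replacing $n$ by $n-1$ in~\eqref{PnkSnk02} gives $P(n,k)=(n-k)S(n-1,k)$ and $P(n,k-1)=(n-k+1)S(n-1,k-1)$; for $0\le k\le\lrf{n/2}$ we have $n-k>0$, so these may be solved for $S(n-1,k)$ and $S(n-1,k-1)$. I would then expand $S(n,k)$ by~\eqref{Snk-recurrence} and substitute:
\[
P(n+1,k)=(n+1-k)\left[(k+1)\frac{P(n,k)}{n-k}+(n-2k+1)\frac{P(n,k-1)}{n-k+1}\right].
\]
Since $n+1-k=n-k+1$, the second summand collapses to $(n-2k+1)P(n,k-1)$ and the first to $\frac{(k+1)(n-k+1)}{n-k}P(n,k)$, which is exactly~\eqref{pnk-recurrence}.

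Finally, for the boundary values I would evaluate $S(n,0)$ and $S(n,1)$. The identity permutation is the unique descent-free simsun permutation, so $S(n,0)=1$, whence $P(n+1,0)=n+1$, i.e.\ $P(n,0)=n$. For $k=1$, the recurrence~\eqref{Snk-recurrence} reduces to $S(n,1)=2S(n-1,1)+(n-1)$ with $S(1,1)=0$; solving this first-order linear recurrence gives $S(n,1)=2^n-n-1$, and therefore $P(n+1,1)=nS(n,1)=n(2^n-n-1)$, that is, $P(n,1)=(n-1)(2^{n-1}-n)$.

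I expect no serious obstacle here, since the real content is carried by Lemma~\ref{lemma02}. The only points deserving mild care are verifying $n-k\ne 0$ before inverting the closed form (guaranteed by $k\le\lrf{n/2}$) and solving the auxiliary recurrence for $S(n,1)$, both of which are routine.
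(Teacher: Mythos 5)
Your proof is correct and follows essentially the same route as the paper: the paper also obtains \eqref{PnkSnk02} by summing the two identities of Lemma~\ref{lemma02} via $P(n+1,k)=P^+(n+1,k)+P^-(n+1,k)$, and notes that \eqref{pnk-recurrence} follows immediately from \eqref{Snk-recurrence} and \eqref{PnkSnk02}, which is exactly the substitution you carry out. Your explicit verification of the boundary values $P(n,0)=n$ and $P(n,1)=(n-1)(2^{n-1}-n)$ fills in details the paper leaves implicit, but introduces no new method.
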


It should be noted that~\eqref{pnk-recurrence} follows immediately from
~\eqref{Snk-recurrence} and~\eqref{PnkSnk02}.

We now recall some notations from~\cite{gw96} concerning the zeros of polynomials.
Let $\rz$ denote the set of real polynomials with only real zeros.
Furthermore, denote by $\rz(I)$ the set of such polynomials all
whose zeros are in the interval $I$.
Suppose that $p,q\in\rz$,
that those of $p$ are $\xi_1\leqslant\cdots\leqslant\xi_n$,
and that those of $q$ are $\theta_1\leqslant\cdots\leqslant\theta_m$.
We say that $p$ {\it interlaces} $q$ if $\deg q=1+\deg p$ and the zeros of
$p$ and $q$ satisfy
$$
\theta_1\leqslant\xi_1\leqslant\theta_2\leqslant\cdots\leqslant\xi_n
\leqslant\theta_{n+1}.
$$
We also say that $p$ {\it alternates left of} $q$ if $\deg p=\deg q$
and the zeros of $p$ and $q$ satisfy
$$
\xi_1\leqslant\theta_1\leqslant\xi_2\leqslant\cdots\leqslant\xi_n
\leqslant\theta_n.
$$
We use the notation $p\dagger q$ for ``$p$ interlaces $q$,''
$p\ll q$ for ``$p$ alternates left of $q$,'' and $p\prec q$ for
either $p\dagger q$ or $p\ll q$. For notational convenience,
let $a\prec bx+c$ for any real constants $a,b,c$.

We now recall a result on the real-rootedness of $S_n(x)$.
\begin{lemma}[{\cite[Theorem 1]{Chow11}}]
For $n\geq 2$, we have $S_n(x)\in\rz(-\infty,0)$ and $S_n(x)\prec S_{n+1}(x)$.
\end{lemma}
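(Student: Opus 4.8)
The plan is to argue by induction on $n$, carrying along the sharper statement that $S_n(x)$ has exactly $\lrf{n/2}$ \emph{simple} zeros, all in $(-\infty,0)$, and with positive leading coefficient, while proving the interlacing $S_n\prec S_{n+1}$ as part of the same induction. The engine is the recurrence~\eqref{Snx-recu}, read as the operator identity $S_{n+1}(x)=(1+nx)S_n(x)+x(1-2x)S_n'(x)$. Two structural facts make this operator well behaved on the negative axis: first, $x(1-2x)<0$ throughout $(-\infty,0)$; and second, $S_{n+1}(0)=S_n(0)=1>0$, which is immediate on setting $x=0$ in the recurrence. The base case $n=2$ is checked directly from $S_2(x)=1+x$.

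For the inductive step, assume $S_n$ has simple zeros $r_1<\cdots<r_d<0$ with $d=\lrf{n/2}$. Evaluating the recurrence at a zero $r_i$ annihilates the first term and yields $S_{n+1}(r_i)=r_i(1-2r_i)S_n'(r_i)$. Since $r_i(1-2r_i)<0$ for \emph{every} $i$, while $S_n'$ alternates in sign along consecutive simple zeros of $S_n$, the values $S_{n+1}(r_1),\ldots,S_{n+1}(r_d)$ strictly alternate in sign. This produces $d-1$ zeros of $S_{n+1}$, one in each open interval $(r_i,r_{i+1})$.

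The remaining zeros come from boundary bookkeeping, which is the delicate part of the argument. Comparing the top terms of $(1+nx)S_n$ and $x(1-2x)S_n'$ shows the leading coefficient of $S_{n+1}$ equals $(n-2d)$ times that of $S_n$; hence $\deg S_{n+1}=d$ when $n$ is even and $\deg S_{n+1}=d+1$ when $n$ is odd, matching $\lrf{(n+1)/2}$, with the leading coefficient staying positive. Next I locate a zero in $(r_d,0)$: since $S_n'(r_d)>0$ (rightmost simple zero, positive leading coefficient) and $r_d(1-2r_d)<0$, we have $S_{n+1}(r_d)<0$, whereas $S_{n+1}(0)=1>0$, forcing a sign change there. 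Finally I compare the signs of $S_{n+1}(r_1)$ and of $S_{n+1}(x)$ as $x\to-\infty$: a short parity computation (using $\operatorname{sign}S_n'(r_1)=(-1)^{d-1}$) shows these signs agree when $n$ is even, so no further zero is needed, giving $d$ zeros and the relation $S_n\ll S_{n+1}$; and disagree when $n$ is odd, forcing one more zero in $(-\infty,r_1)$, giving $d+1$ zeros and the relation $S_n\dagger S_{n+1}$. In both cases all $\lrf{(n+1)/2}$ zeros are accounted for inside disjoint open subintervals of $(-\infty,0)$, hence are simple, which closes the induction and delivers $S_{n+1}\in\rz(-\infty,0)$ together with $S_n\prec S_{n+1}$.

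The main obstacle is precisely this boundary analysis: one must track four signs—at $-\infty$, at $r_1$, at $r_d$, and at $0$—together with the parity of $d$, in order to certify that the zero count equals the degree and that no spurious nonnegative or repeated zero appears. Everything else reduces to the routine sign-alternation step. I would remark that this is exactly the behaviour encapsulated by the general interlacing-preservation lemmas of~\cite{gw96}, so an equivalent writeup is simply to verify the hypotheses of such a lemma for the operator $f\mapsto(1+nx)f+x(1-2x)f'$ on the interval $(-\infty,0)$.
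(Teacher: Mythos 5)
Your proof is correct, but there is nothing in the paper to compare it against: the paper does not prove this lemma, it imports it verbatim from Chow and Shiu \cite[Theorem 1]{Chow11}. What you have supplied is a self-contained induction from the recurrence \eqref{Snx-recu}, and it checks out. At a zero $r_i$ of $S_n$ the recurrence collapses to $S_{n+1}(r_i)=r_i(1-2r_i)S_n'(r_i)$; the factor $r_i(1-2r_i)$ is negative throughout $(-\infty,0)$, the derivative of a real-rooted polynomial alternates in sign at consecutive simple zeros, and together with $S_{n+1}(0)=1>0$ and the sign at $-\infty$ this pins down exactly $\lrf{(n+1)/2}$ zeros in disjoint open subintervals of $(-\infty,0)$, giving $S_n\ll S_{n+1}$ for $n$ even and $S_n\dagger S_{n+1}$ for $n$ odd, hence $S_n\prec S_{n+1}$ in both cases. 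One point you should make explicit: when $n=2d$ is even, the coefficient of $x^{d+1}$ in $S_{n+1}$ vanishes, and you assert without computation that the new leading coefficient (that of $x^d$) is positive. This does follow, because the induction hypothesis (all zeros negative, positive leading coefficient) forces \emph{all} coefficients $c_0,\ldots,c_d$ of $S_n$ to be positive, and the coefficient of $x^d$ in $S_{n+1}$ works out to $(d+1)c_d+2c_{d-1}>0$; a one-line remark closes this. Two contextual observations: your sign-counting technique is exactly the one the paper itself uses later to prove that $P_n(x),P^{+}_n(x),P^{-}_n(x)\in\rz(-\infty,0)$, so your argument fits the paper's style; and your closing remark is apt, since the operator $f\mapsto(1+nx)f+x(1-2x)f'$ falls under \cite[Theorem 2]{Ma08}, the very result the paper invokes for the corollary on $S_n(x,q)$, so either your explicit induction or that citation (with $q=1$) would replace the appeal to \cite{Chow11}.
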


Let $\sgn$ denote the sign function defined on $\R$.
\begin{theorem}
For $n\geq 2$, we have $P_n(x),P^{+}_n(x),P^{-}_n(x)\in\rz(-\infty,0)$ and
  $$P_{n+1}(x)\ll S_n(x),~ P^{+}_{n+1}(x)\prec S_n(x),~S_n(x)\ll P^{-}_{n+1}(x).$$
\end{theorem}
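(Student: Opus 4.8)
The plan is to read everything off from the explicit formulas in Lemma~\ref{lemma02}. By~\eqref{PnkxSnkx01} and~\eqref{PnkxSnkx02}, each of the three polynomials is a first order differential combination of $S_n$:
$$
P_{n+1}^+(x)=nS_n(x)-2xS_n'(x),\quad P_{n+1}^-(x)=S_n(x)+xS_n'(x),\quad P_{n+1}(x)=(n+1)S_n(x)-xS_n'(x).
$$
By the cited real-rootedness result of Chow and Shiu, $S_n\in\rz(-\infty,0)$; I will first note that its zeros are in fact simple, say $r_1<r_2<\cdots<r_d$ with $d=\lrf{n/2}$, which follows from the strict interlacing $S_n\prec S_{n+1}$. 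Writing $S_n(x)=c\prod_i(x-r_i)$ with $c>0$, one has $\sgn S_n'(r_j)=(-1)^{d-j}$. The whole argument then rests on evaluating the combinations above at the $r_j$: since $S_n(r_j)=0$,
$$
P_{n+1}^+(r_j)=-2r_jS_n'(r_j),\quad P_{n+1}^-(r_j)=r_jS_n'(r_j),\quad P_{n+1}(r_j)=-r_jS_n'(r_j),
$$
and because $r_j<0$ each of these alternates in sign with $j$, producing at least $d-1$ zeros interlacing the $r_j$.

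The case of $P_{n+1}^-$ is cleanest, since $P_{n+1}^-(x)=(xS_n(x))'$. The polynomial $xS_n(x)$ lies in $\rz(-\infty,0]$ with zeros $r_1<\cdots<r_d<0$, so by Rolle's theorem its derivative has exactly one zero in each of $(r_1,r_2),\ldots,(r_{d-1},r_d),(r_d,0)$. These are all $d=\deg P_{n+1}^-$ of its zeros, they all lie in $(-\infty,0)$, and they satisfy $r_1<s_1<r_2<\cdots<r_d<s_d<0$, which is precisely $S_n\ll P_{n+1}^-$.

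For $P_{n+1}^+$ and $P_{n+1}$ I would combine the sign alternation above with a degree count and the behaviour at the ends. The leading coefficient of $nS_n-2xS_n'$ is $(n-2d)c$; when $n$ is even this vanishes and $\deg P_{n+1}^+=d-1$, so the $d-1$ interlacing zeros already found are all of them and $P_{n+1}^+\dagger S_n$. When $n$ is odd the leading coefficient is $c>0$, so $\deg P_{n+1}^+=d$; comparing the sign at $-\infty$, namely $(-1)^d$, with $\sgn P_{n+1}^+(r_1)=(-1)^{d-1}$ locates the remaining zero in $(-\infty,r_1)$, giving $P_{n+1}^+\ll S_n$. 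The same scheme applies to $P_{n+1}=(n+1)S_n-xS_n'$, which has degree $d$, positive leading coefficient, and $d-1$ interlacing zeros; comparing $\sgn P_{n+1}(-\infty)=(-1)^d$ with $\sgn P_{n+1}(r_1)=(-1)^{d-1}$ places its last zero in $(-\infty,r_1)$, yielding $P_{n+1}\ll S_n$. In every case the located zeros lie in $(-\infty,0)$, so $P_n^+,P_n^-,P_n\in\rz(-\infty,0)$.

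The main obstacle is not any single deep estimate but the bookkeeping: one must justify the simplicity of the zeros of $S_n$ (so that $S_n'(r_j)\neq0$ and the alternation is strict), and one must track the parity of $n$ carefully, since the degree of $P_{n+1}^+$ relative to $S_n$ switches between the interlacing regime ($n$ even, giving $\dagger$) and the equal-degree regime ($n$ odd, giving $\ll$); the small-degree cases are covered by the convention $a\prec bx+c$. A cleaner alternative would be to invoke a general interlacing-preservation lemma for operators of the form $f\mapsto\alpha f+\beta xf'$ from~\cite{gw96}, but the direct evaluation at the zeros of $S_n$ keeps the argument self-contained and makes all three interlacing directions transparent.
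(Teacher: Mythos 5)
Your proof is correct and takes essentially the same route as the paper's: evaluate the formulas of Lemma~\ref{lemma02} at the simple negative zeros of $S_n(x)$, and combine the resulting sign alternation with degree and leading-coefficient bookkeeping to locate all zeros and read off the interlacing relations. The only departure is your Rolle's-theorem treatment of $P_{n+1}^-(x)=\frac{d}{dx}\left(xS_n(x)\right)$, a neat shortcut for a case the paper dispatches with ``similarly.''
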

\begin{proof}
For $n\geq 2$, let $r_{\lrf{n/2}}<r_{\lrf{n/2}-1}<\cdots<r_2<r_1$ be the distinct zeros of $S_n(x)$.
Then by~\eqref{PnkxSnkx02}, we get $\sgn P_{n+1}(r_i)=(-1)^{i-1}$ for $i=1,2,\ldots,\lrf{n/2}$.
Hence $P_{n+1}(x)$ has precisely one zero in each of $\lrf{n/2}-1$ intervals $(r_{\lrf{n/2}},r_{\lrf{n/2}-1}),\ldots,(r_2,r_1)$.
Recall that $\deg P_{n+1}(x)=\lrf{{n}/{2}}$.
If $n=2k+1$ is odd, then $\sgn P_{n+1}(r_k)=(-1)^{k-1}$ and
$\sgn P_{n+1}(-\infty)=(-1)^k$. If $n=2k+2$ is even, then $\sgn P_{n+1}(r_k)=(-1)^{k}$ and
$\sgn P_{n+1}(-\infty)=(-1)^{k+1}$. Thus $P_{n+1}(x)$ has an additional zero in the interval $(-\infty,r_{\lrf{n/2}})$.
Therefore, we have $P_{n+1}(x)\ll S_n(x)$.
Similarly, by using~\eqref{PnkxSnkx01}, one can derive $P^{+}_{n+1}(x)\prec S_n(x)$ and $S_n(x)\ll P^{-}_{n+1}(x)$.
\end{proof}
\section{Up-down runs of simsun permutations}\label{Section-4}
An {\it alternating subsequence} of $\pi\in\msn$ is a subsequence $\pi(i_1),\pi(i_2),\ldots,\pi(i_k)$ satisfying
$$\pi(i_1)>\pi(i_2)<\pi(i_3)>\cdots\pi(i_k),$$
where $i_1<i_2<\cdots <i_k$. Motivated by the study of longest increasing subsequences,
Stanley~\cite{Sta08} studied the longest alternating subsequences.
Let $\l_n(\pi)$ be the length of the longest alternating subsequence of a permutation $\pi\in\msn$.
The {\it up-down runs} of a permutation $\pi$ are the alternating runs of $\pi$ endowed with a 0
in the front~(see~\cite[A186370]{Sloane}). For example, the permutation $\pi=514623$ has 4 alternating runs and 5 up-down runs.
Let $\uprun(\pi)$ be the number of up-down runs of $\pi$. It is clear that $\uprun(\pi)=\l_n(\pi)$ for $\pi\in\msn$.
We define $$T_n(x)=\sum_{\pi\in\rs_n}x^{\uprun(\pi)}=\sum_{k=1}^nT(n,k)x^k.$$
The first few terms of $T_n(x)$ are
\begin{align*}
  T_1(x) =x,~T_2(x) =x+x^2,~T_3(x)=x+2x^2+2x^3,~T_4(x)=x+3x^2+8x^3+4x^4.
\end{align*}
\begin{theorem}
For $n\geq 1$, the numbers $T(n,k)$ satisfy the recurrence relation
\begin{equation}\label{Tnk-recu}
T(n,k)=\lrc{k/2}T(n-1,k)+T(n-1,k-1)+(n-k+1)T(n-1,k-2),
\end{equation}
with initial conditions $T(0,0)=1$ and $T(0,k)=0$ for $k>0$.
\end{theorem}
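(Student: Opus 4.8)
The plan is to prove the recurrence for $n\ge 2$ by the standard insertion argument, producing each $\pi\in\rs_n$ uniquely from a simsun permutation $\sigma\in\rs_{n-1}$ by inserting the largest letter $n$, and then tracking how $\uprun$ changes; the value $T(1,1)=1$ together with the stated initial conditions settles $n\le 1$. Since deleting $n$ from any $\pi\in\rs_n$ leaves the restrictions to $[k]$, $k\le n-1$, unchanged, the resulting word $\sigma$ is again simsun, so this insertion is a bijection between $\rs_n$ and the pairs $(\sigma,\text{admissible gap})$. By the observation recalled just before Definition~\ref{def01}, the admissible gaps are exactly those not lying immediately before a descent top of $\sigma$.

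First I would pass to the ``step word'' of $\hat\sigma=0\,\sigma(1)\cdots\sigma(n-1)$, recording each of the $n-1$ consecutive comparisons as $U$ or $D$; then $\uprun(\sigma)$ equals one plus the number of sign changes of this word, its first letter is $U$, and the simsun condition is precisely that the word contains no factor $DD$. Writing $j=\uprun(\sigma)$, two bookkeeping identities fall out: $\des(\sigma)=\lrf{j/2}$ (each maximal descending run is a single $D$), and the number of $UU$ factors is $(n-2)-(j-1)=n-1-j$. Because $n$ is maximal, inserting it turns a single step into $UD$ at an interior gap, or appends a $U$ at the final gap; a short case analysis on the two steps flanking the gap, using $s_1=U$ and the absence of $DD$, shows that an admissible interior insertion changes $\uprun$ by $+2$ exactly at the $UU$ factors and by $0$ at the remaining admissible interior gaps, that the last interior gap and the end gap together supply exactly one insertion raising $\uprun$ by $+1$, and that no admissible gap produces any other value.

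Collecting the three outcomes yields the three terms: the $n-1-j$ gaps with change $+2$ give the coefficient $n-k+1$ at $j=k-2$, the single gap with change $+1$ gives the coefficient $1$ at $j=k-1$, and the gaps with change $0$ give the coefficient at $j=k$. The main obstacle is precisely this last count, which must come out to $\lrc{k/2}$ rather than $\lrf{k/2}$: the number of admissible interior gaps fixing $\uprun$ is $\lrf{j/2}$ or $\lrf{j/2}-1$ according to the direction of the final step $s_{n-1}$, and the delicate point is that $s_{n-1}=U$ (respectively $D$) forces $j$ odd (respectively even), since the runs alternate $U,D,U,\dots$ starting with $U$; combining this parity with the $0$-contribution of whichever of the last two gaps is not already counted converts $\lrf{j/2}$ into $\lrc{j/2}=\lrc{k/2}$ in both parities, consistently with the total admissible count $n-\des(\sigma)=n-\lrf{j/2}$. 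I expect this parity-sensitive matching, and the care needed to count the end effects once and only once, to be the most error-prone step; a check against the listed $T_3(x)$ and $T_4(x)$ will confirm the bookkeeping.
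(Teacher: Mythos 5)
Your proposal is correct and takes essentially the same approach as the paper: insert the largest entry $n$ into $\sigma\in\rs_{n-1}$, restrict to the admissible gaps (those not immediately preceding a descent top), and classify each gap by whether it raises $\uprun$ by $0$, $1$, or $2$, with the parity of $\uprun(\sigma)$ tied to the direction of the final run. Your U/D step-word bookkeeping (counting the $+2$ gaps as $UU$ factors and obtaining the $\lrc{k/2}$ count of $0$-gaps by parity) is just a more explicit rendering of the paper's run-based cases (a)--(c), which instead counts the $0$-gaps directly as ends of ascending runs and gets the $+2$ count as the complement.
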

\begin{proof}
There are three ways in which a permutation $\pi'\in\rs_{n}$ with $\uprun(\pi')=k$
can be obtained from a permutation $\pi\in\rs_{n-1}$ by inserting the entry $n$ into $\pi$.
\begin{enumerate}
\item [(a)] If $\uprun(\pi)=k$, then we can insert the entry $n$ right after the end of
each ascending run. This accounts for $\lrc{k/2}T(n-1,k)$ possibilities.
\item [(b)] If $\uprun(\pi)=k-1$, then we distinguish two cases: when $\pi$ ends with an
ascending run, we insert the entry $n$ to the front of the last entry of $\pi$; when $\pi$ ends with
descending run, we insert the entry $n$ at the end of $\pi$.
This gives $T(n-1,k-1)$ possibilities.
\item [(c)] If $\uprun(\pi)=k-2$,
then we can insert the entry $n$ into the remaining $n-k+1$ positions.
This gives $(n-k+1)T(n-1,k-2)$ possibilities.
\end{enumerate}
This completes the proof of~\eqref{Tnk-recu}.
\end{proof}

Note that $S(n,0)=T(n,1)=1$,
corresponding to the permutation $12\cdots n$. Recall that an element $\pi$ of $\msn$ is {\it alternating} if $\pi(1)>\pi(2)<\pi(3)>\cdots \pi(n)$. In other words, $\pi(i)<\pi({i+1})$ if $i$ is even and $\pi(i)>\pi({i+1})$ if $i$ is odd.
If $n=2m$ is even, then $S(2m,m)=T(2m,2m)$, corresponding to
the number of alternating permutations in $\rs_{2m}$. If $n=2m+1$ is odd, applying the complement operation $\phi$ to $\pi\in\rs_n$, i.e., $\phi(\pi(i))=\pi(n+1-i)$,
it is clear that $P(2m+1,m)=T(2m+1,2m+1)$ counts the number of alternating permutations in $\rs_{2m+1}$.
In general, by analyzing permutations in $\rs_n^{+}$ and $\rs_n^{-}$, it is easy to verify that
\begin{equation*}\label{SnkTnkPnk}
S(n,k)=T(n,2k)+T(n,2k+1),~P(n,k)=T(n,2k+1)+T(n,2k+2).
\end{equation*}
Equivalently, we have
\begin{equation}\label{TnxSnxPnx}
(1+x)T_n(x)=xS_n(x^2)+x^2P_n(x^2)\quad\textrm{for $n\ge 1$.}
\end{equation}

\begin{theorem}
For $n\geq 0$, we have
\begin{equation*}
T_{n+1}(x)=x(1+nx)S_n(x^2)+\frac{1}{2}x^2(1-2x)S_n'(x^2).
\end{equation*}
\end{theorem}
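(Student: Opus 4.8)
The plan is to obtain the identity directly from~\eqref{TnxSnxPnx} by eliminating the two ``index $n+1$'' objects $S_{n+1}$ and $P_{n+1}$. First I would apply~\eqref{TnxSnxPnx} with its index equal to $n+1$ (legitimate because that formula holds for every index $\geq 1$, i.e.\ for all $n\geq 0$) to write
\begin{equation*}
(1+x)T_{n+1}(x)=xS_{n+1}(x^2)+x^2P_{n+1}(x^2).
\end{equation*}
Next I would substitute two known closed forms, each evaluated at argument $x^2$: the recurrence~\eqref{Snx-recu}, which gives $S_{n+1}(x^2)=(1+nx^2)S_n(x^2)+x^2(1-2x^2)S_n'(x^2)$, and the identity~\eqref{PnkxSnkx02}, which gives $P_{n+1}(x^2)=(n+1)S_n(x^2)-x^2S_n'(x^2)$; here $S_n'$ denotes the derivative of the polynomial $S_n$ with respect to its argument.

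Collecting the result as a linear combination of $S_n(x^2)$ and $S_n'(x^2)$, I would compute that the coefficient of $S_n(x^2)$ is $x+(n+1)x^2+nx^3$ and the coefficient of $S_n'(x^2)$ is $x^3-x^4-2x^5$. The decisive observation is that $(1+x)$ divides each of these exactly:
\begin{equation*}
x+(n+1)x^2+nx^3=x(1+x)(1+nx),\qquad x^3-x^4-2x^5=x^3(1+x)(1-2x).
\end{equation*}
Cancelling the common factor $(1+x)$ from both sides then yields
\begin{equation*}
T_{n+1}(x)=x(1+nx)S_n(x^2)+x^3(1-2x)S_n'(x^2).
\end{equation*}

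To reach the form displayed in the statement I would finally invoke the chain rule $\frac{d}{dx}S_n(x^2)=2xS_n'(x^2)$, so that $x^3(1-2x)S_n'(x^2)=\frac{1}{2}x^2(1-2x)\frac{d}{dx}S_n(x^2)$; reading the symbol $S_n'(x^2)$ in the statement as $\frac{d}{dx}S_n(x^2)$ gives precisely $T_{n+1}(x)=x(1+nx)S_n(x^2)+\frac{1}{2}x^2(1-2x)S_n'(x^2)$. The algebra above relies on~\eqref{PnkxSnkx02}, which is stated for $n\geq 1$, so the boundary case $n=0$ I would check by hand: $T_1(x)=x$ agrees with $x(1+0)S_0(x^2)=x$ since $S_0\equiv 1$ and the derivative term vanishes. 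I expect the main obstacle to be not the two factorizations, which are routine, but keeping the two notions of derivative straight: the $S_n'(x^2)$ produced by the substitutions is the argument-derivative of $S_n$ evaluated at $x^2$, whereas the $S_n'(x^2)$ appearing in the statement carries the extra chain-rule factor $2x$, and tracking this factor is exactly what converts the honest coefficient $x^3(1-2x)$ into the displayed $\frac{1}{2}x^2(1-2x)$.
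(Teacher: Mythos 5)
Your proposal is correct and takes essentially the same route as the paper's proof: both start from~\eqref{TnxSnxPnx} at index $n+1$, substitute~\eqref{Snx-recu} and~\eqref{PnkxSnkx02} at argument $x^2$, and cancel the common factor $(1+x)$ after the same two factorizations. The only differences are notational and cosmetic—the paper writes $S_n'(x^2)$ for $\frac{d}{dx}S_n(x^2)$ throughout (so the chain-rule factor is absorbed from the start), whereas you carry the argument-derivative and convert at the end, and your separate check of the boundary case $n=0$ is a small extra care the paper omits.
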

\begin{proof}
It follows from~\eqref{Snx-recu},~\eqref{PnkxSnkx02} and~\eqref{TnxSnxPnx} that
\begin{align*}
 (1+x)T_{n+1}(x)&=xS_{n+1}(x^2)+x^2P_{n+1}(x^2) \\
  & =x((1+nx^2)S_n(x^2)+\frac{1}{2}x(1-2x^2)S_n'(x^2))+x^2((n+1)S_n(x^2)-\frac{x}{2}S_n'(x^2))\\
  & =x(1+(n+1)x+nx^2)S_n(x^2)+\frac{1}{2}x^2(1-x-2x^2)S_n'(x^2)\\
  &=x(1+x)(1+nx)S_n(x^2)+\frac{1}{2}x^2(1+x)(1-2x)S_n'(x^2),
\end{align*}
the statement immediately follows.
\end{proof}

We call the simsun permutations discussed above to be the simsun permutations of the first kind. In the next section, we shall introduce
the simsun permutations of the second kind.
\section{Simsun permutations of the second kind}\label{Section-5}
In this section, we always write $\pi\in\msn$ in {\it standard
cycle decomposition}, where each cycle is written with its smallest entry first and the
cycles are written in increasing order of their smallest entry.
For each $\pi\in\msn$, we say that $\pi$ has an {\it excedance} at $i$ if $\pi(i)>i$.
The excedance number of $\pi$ is defined by
$\exc(\pi)=\#\{i\in [n-1]: \pi(i)>i\}$.
Following~\cite{Zeng12}, for $\pi\in\msn$, a value $x=\pi(i)$ is called a {\it double excedance} if
$i=\pi^{-1}(x)<x<\pi(x)$, and we say that $x=\pi(i)$ is a {\it cyclic peak} if $i=\pi^{-1}(x)<x>\pi(x)$.
Let $\cpk(\pi)$ denote the number of cyclic peaks of $\pi$.

\begin{definition}
We say that $\pi\in\msn$ is a {\it simsun permutation of the second kind} if for all $k\in [n]$,
after removing the $k$ largest letters of $\pi$, the resulting permutation has no double excedances.
\end{definition}

For example, $(1,5,3,4)(2)$ is not a simsun permutation of the second kind since when we remove the letter 5,
the resulting permutation $(1,3,4)(2)$ contains a double excedance.
Let $\rss_n$ be the set of the simsun permutations of the second kind of length $n$.
It is clear that $\exc(\pi)=\cpk(\pi)$ for $\pi\in\rss_n$.

In the following, we first present a constructive proof of the following identity:
\begin{equation}\label{des-exc}
\sum_{\pi\in\rs_n}x^{\des(\pi)}=\sum_{\pi\in\rss_n}x^{\exc(\pi)}.
\end{equation}

Let $\rss_{n,k}=\{\pi\in\rss_n \mid \exc(\pi)=k\}$.
As a variant of Definition~\ref{def01},
we introduce a definition of {\it labeled simsun permutations of the the second kind}.
\begin{definition}
Let $\sigma\in\rss_{n,k}$.
Suppose $i_1<i_2<\cdots<i_k$ are the excedances of $\sigma$. Then we put the superscript labels $u_r$
right after $i_{r}$, where $1\leq r\leq k$.
In the remaining positions
except the first position of each cycle and the positions right after $\sigma(i_r)$, we put the superscript labels $v_1,v_2,\ldots,v_{n-2k}$
from left to right.
\end{definition}

As an example, for $\sigma=(1,3)(2,4)(5)\in \rss_{5,2}$, the labeled $\sigma$ is given by $(1^{u_1}3)(2^{u_2}4)(5^{v_1})$.

Now we start to construct a bijection, denoted by $\Psi$, between $\rs_{n,k}$ and $\rss_{n,k}$.
When $n=1$, we have $\rs_{1,0}=\{^{y_1}1\}$. Set $\Psi(^{y_1}1)=(1^{v_1})$.
This gives a bijection between $\rs_{1,0}$ and $\rss_{1,0}$.
Let $n=m$. Suppose $\Psi$ is a bijection between $\rs_{m,k}$ and $\rss_{m,k}$ for all $k$.
Given $\pi\in\rs_{m,k}$. Suppose $\Psi(\pi)=\sigma$. Consider the following three cases:
\begin{enumerate}
  \item [\rm ($i$)] If $\widehat{\pi}$ is obtained from $\pi$ by inserting the entry $m+1$ to the position of $\pi$ with label $x_r$, then we insert $m+1$ to $\sigma$ with label $u_r$.
In this case, $\des(\widehat{\pi})=\exc(\Psi(\widehat{\pi}))=k$. Hence $\widehat{\pi}\in\rs_{m+1,k}$ and $\Psi(\widehat{\pi})\in\rss_{m+1,k}$.
 \item [\rm ($ii$)] If $\widehat{\pi}$ is obtained from $\pi$ by inserting the entry $m+1$ to the position of $\pi$ with label $y_r$, then we insert $m+1$ to $\sigma$ with label $v_r$.
In this case, $\des(\widehat{\pi})=\exc(\Psi(\widehat{\pi}))=k+1$. Hence $\widehat{\pi}\in\rs_{m+1,k+1}$ and $\Psi(\widehat{\pi})\in\rss_{m+1,k+1}$.
 \item [\rm ($iii$)] If $\widehat{\pi}$ is obtained from $\pi$ by inserting the entry $m+1$ at the end of $\pi$, then we append $(m+1)$ to $\sigma$ as a new cycle. Hence $\widehat{\pi}\in\rs_{m+1,k}$ and $\Psi(\widehat{\pi})\in\rss_{m+1,k}$.
\end{enumerate}
By induction, we see that $\Psi$ is the desired bijection between $\rs_{m,k}$ and $\rss_{m,k}$ for all $k$,
which also gives a constructive proof of~\eqref{des-exc}.

\begin{ex}
Given $\pi=3412\in\rs_{4,1}$.
The correspondence between $\pi$ and $\Psi(\pi)$ is built up as follows:
\begin{align*}
^{y_1}1&\Leftrightarrow (1^{v_1});\\
^{y_1}1^{y_2}2&\Leftrightarrow (1^{v_1})(2^{v_2});\\
3^{x_1}1^{y_1}2&\Leftrightarrow (1^{u_1}3)(2^{v_1});\\
^{y_1}34^{x_1}1^{y_2}2&\Leftrightarrow (1^{u_1}43^{v_1})(2^{v_2}).
\end{align*}
\end{ex}

We now consider the following enumerative polynomials
$$S_n(x,q)=\sum_{\pi\in\rss_n}x^{\exc(\pi)}q^{\cyc(\pi)},$$
where $\cyc(\pi)$ is the number of cycles of $\pi$.
Let $$S=S(x,q;z)=\sum_{n\geq0}S_n(x,q)\frac{z^n}{n!}.$$

\begin{theorem}\label{thm13}
The polynomials $S_n(x,q)$ satisfy the recurrence relation
\begin{equation}\label{Snxq-recu}
S_{n+1}(x,q)=(q+nx)S_n(x,q)+x(1-2x)\frac{\partial}{\partial x}(S_n(x,q)),
\end{equation}
with the initial condition $S_0(x,q)=1$. Furthermore,
\begin{equation}\label{Sxzq}
S(x,q;z)=S(x,z)^q.
\end{equation}
\end{theorem}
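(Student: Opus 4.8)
The plan is to establish the recurrence \eqref{Snxq-recu} by a labelled-insertion argument that refines the bijection $\Psi$ of Section~\ref{Section-5} with the cycle statistic, and then to deduce \eqref{Sxzq} by turning the recurrence into a first-order partial differential equation and checking that $S(x,z)^q$ solves it.

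For the recurrence, I would build every $\sigma'\in\rss_{n+1}$ from a unique $\sigma\in\rss_n$ by inserting the largest letter $n+1$; since $n+1$ is largest, deleting it recovers $\sigma$ and preserves the simsun property of the second kind, so the insertion map is a bijection onto $\rss_{n+1}$. Fix $\sigma\in\rss_n$ with $\exc(\sigma)=k$ and track the effect of each admissible insertion on the pair $(\exc,\cyc)$. Appending $(n+1)$ as a new singleton cycle leaves $\exc$ fixed and raises $\cyc$ by one, contributing weight $q$. Inserting $n+1$ right after an excedance bottom $i_r$ (a $u$-label) replaces the arc $i_r\to\sigma(i_r)$ by $i_r\to n+1\to\sigma(i_r)$; as $n+1>i_r$ and $\sigma(i_r)<n+1$ this keeps $\exc=k$ and fixes $\cyc$, and there are $k$ such slots. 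Inserting $n+1$ after any of the remaining $n-2k$ letters (the $v$-labels) turns a non-excedance into an excedance, raising $\exc$ to $k+1$ and fixing $\cyc$. The slots right after an excedance top $\sigma(i_r)$ are forbidden, since inserting there produces $i_r<\sigma(i_r)<n+1$, a double excedance; the disjointness of excedance bottoms and tops (guaranteed by the absence of double excedances) makes the split $k+(n-2k)$ exact. Summing over $\sigma\in\rss_n$ and using $\sum_\sigma \exc(\sigma)\,x^{\exc(\sigma)}q^{\cyc(\sigma)}=x\frac{\partial}{\partial x}S_n(x,q)$, the three contributions assemble into $qS_n+x\frac{\partial}{\partial x}S_n+\big(nxS_n-2x^2\frac{\partial}{\partial x}S_n\big)$, which is exactly $(q+nx)S_n+x(1-2x)\frac{\partial}{\partial x}S_n$.

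To obtain \eqref{Sxzq} I would multiply \eqref{Snxq-recu} by $z^n/n!$ and sum over $n\ge0$. With $\sum_n S_{n+1}(x,q)z^n/n!=\frac{\partial}{\partial z}S$ and $\sum_n nS_n(x,q)z^n/n!=z\frac{\partial}{\partial z}S$, the recurrence becomes the linear equation
\[
(1-xz)\frac{\partial}{\partial z}S=qS+x(1-2x)\frac{\partial}{\partial x}S,\qquad S\big|_{z=0}=1,
\]
and the case $q=1$ shows that $S(x,z)=S(x,1;z)$ satisfies it with $q$ replaced by $1$. I would then check directly that $F:=S(x,z)^q$ solves the general equation: from $\frac{\partial}{\partial z}F=qS(x,z)^{q-1}\frac{\partial}{\partial z}S(x,z)$ and $\frac{\partial}{\partial x}F=qS(x,z)^{q-1}\frac{\partial}{\partial x}S(x,z)$, substituting and invoking the $q=1$ identity $(1-xz)\frac{\partial}{\partial z}S(x,z)=S(x,z)+x(1-2x)\frac{\partial}{\partial x}S(x,z)$ makes both sides coincide. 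Since $F\big|_{z=0}=1$ and the equation with this initial condition determines the coefficients $S_n(x,q)$ uniquely (they are the ones generated by \eqref{Snxq-recu}), we get $S(x,q;z)=S(x,z)^q$. Because each $S_n(x,q)$ is a polynomial in $q$, it suffices to verify this for positive integers $q$, where $S(x,z)^q$ is a genuine $q$-fold product.

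The step needing the most care is the bookkeeping in the recurrence, namely confirming that the admissible insertions partition into exactly the three types above with counts $1$, $k$, and $n-2k$. Conceptually, \eqref{Sxzq} is explained by the exponential formula: the second-kind simsun condition restricts to each cycle separately---removing the largest letters amounts to passing to initial segments of values, a per-cycle requirement---so second-kind simsun permutations are sets of single-cycle such structures. Marking cycles by $q$ then gives $S(x,q;z)=\exp\big(qC(x,z)\big)=\big(\exp C(x,z)\big)^q=S(x,z)^q$, where $C$ is the exponential generating function of the connected structures; I would use the differential-equation argument for rigour and record this as the structural reason.
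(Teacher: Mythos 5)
Your proposal is correct and follows essentially the same route as the paper: the recurrence is obtained by inserting the largest letter $n+1$ either as a new singleton cycle (contributing $q$), after one of the $k$ excedance bottoms (preserving $\exc$), or after one of the $n-2k$ remaining non-forbidden letters (raising $\exc$ by one), and then \eqref{Sxzq} follows by rewriting \eqref{Snxq-recu} as the PDE $(1-xz)S_z=qS+x(1-2x)S_x$ and checking that $S(x,z)^q$ satisfies it together with the initial conditions. Your additional details — the explicit chain-rule verification via the $q=1$ case and the observation that inserting after an excedance top would create a double excedance — merely spell out steps the paper leaves as routine.
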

\begin{proof}
Let $n$ be a fixed positive integer and given $\sigma\in\rss_n$.
Let $\sigma_i$ be an element of $\rss_{n+1}$ obtained from $\sigma$ by inserting the entry
$n+1$, in the standard cycle decomposition of $\sigma$, right after $i$ if
$i$ is not a cyclic peak of $\sigma$ and $i\in [n]$ or as a new cycle $(n+1)$ if $i=n+1$.
It is clear that
$$ \cyc(\sigma_i)=\left\{
              \begin{array}{ll}
                \cyc(\sigma), & \hbox{if $i\in [n]$;} \\
                \cyc(\sigma)+1, & \hbox{if $i=n+1$.}
              \end{array}
            \right.
$$
Therefore, we have
\begin{align*}
S_{n+1}(x,q)&=\Sigma_{\pi\in\rss_{n+1}}x^{\exc(\pi)}q^{\cyc(\pi)}\\
  &=\Sigma_{i=1}^{n+1}\Sigma_{\sigma\in\rss_{n}}x^{\exc(\sigma_i)}q^{\cyc(\sigma_i)}\\
  &=\Sigma_{\sigma\in\rss_{n}}x^{\exc(\sigma)}q^{\cyc(\sigma)+1}+
\Sigma_{i=1}^{n}\Sigma_{\sigma\in\rss_{n}}x^{\exc(\sigma_i)}q^{\cyc(\sigma)}\\
  &=qS_n(x,q)+\Sigma_{\sigma\in\rss_{n}}(\exc(\sigma)x^{\exc(\sigma)}+(n-2\exc(\sigma))x^{\exc(\sigma)+1})q^{\cyc(\sigma)}\\
  &=qS_n(x,q)+nxS_n(x,q)+\Sigma_{\pi\in\rs_n}(1-2x)\exc(\sigma)x^{\exc(\sigma)}q^{\cyc(\sigma)},\\
\end{align*}
and~\eqref{Snxq-recu} follows.
By rewriting~\eqref{Snxq-recu} in terms of generating function $S$, we have
\begin{equation}\label{Sxz-pde}
(1-xz)S_z=qS+x(1-2x)S_x.
\end{equation}
It is routine to check that the generating function
$\widetilde{S}=\widetilde{S}(x,q;z)=S(x,z)^q$
satisfies~\eqref{Sxz-pde}. Also, this generating function gives $\widetilde{S}(x,q;0)=1,\widetilde{S}(x,0;z)=1$
and $\widetilde{S}(0,q;z)=e^{qz}$. Hence $S=\widetilde{S}$.
\end{proof}

Combining~\eqref{Snxq-recu} and~\cite[Theorem 2]{Ma08}, we get the following corollary.
\begin{cor}
If $q> 0$, then $S_n(x,q)$ has nonpositive and simple zeros for $n\geq 2$.
\end{cor}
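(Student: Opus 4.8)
The plan is to fix $q > 0$ and regard $S_n(x,q)$ as a sequence of one-variable polynomials in $x$ (with $q$ a positive parameter), then apply \cite[Theorem 2]{Ma08} to the recurrence \eqref{Snxq-recu}. Rewriting that recurrence in the form
\begin{equation*}
S_{n+1}(x,q) = a_n(x)\, S_n(x,q) + b(x)\, \frac{\partial}{\partial x} S_n(x,q),
\end{equation*}
we read off $a_n(x) = q + nx$ and $b(x) = x(1-2x)$. The cited theorem provides a sufficient condition under which a derivative-operator recurrence of exactly this shape propagates real-rootedness, simplicity of zeros, and interlacing; the task therefore reduces to checking its hypotheses for the present $a_n$ and $b$.

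First I would record the sign behavior that drives everything: on the interval $(-\infty,0)$ we have $x < 0$ and $1 - 2x > 1 > 0$, hence $b(x) = x(1-2x) < 0$ there, while $b(0)=0$. This strict negativity of the coefficient of the derivative term on $(-\infty,0)$ is precisely the condition \cite[Theorem 2]{Ma08} requires in order to pull the zeros of each successive polynomial strictly to the left. Since, for $q>0$, the polynomial $S_n(x,q)=\sum_{\pi\in\rss_n} x^{\exc(\pi)} q^{\cyc(\pi)}$ has nonnegative coefficients in $x$, once real-rootedness is established the zeros are automatically nonpositive; the real content extracted from the theorem is thus real-rootedness together with simplicity.

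Next I would verify the base case initializing the induction. From \eqref{Snxq-recu} one gets $S_1(x,q)=q$ and $S_2(x,q)=(q+x)q$, whose unique zero $x=-q$ is simple and strictly negative because $q>0$. Feeding this into \cite[Theorem 2]{Ma08} yields, for every $n\geq 2$, that $S_n(x,q)\in\rz(-\infty,0)$ and that $S_n(x,q)\prec S_{n+1}(x,q)$; the interlacing relation forces the zeros of each $S_n(x,q)$ to be simple, which is the assertion of the corollary.

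The main obstacle will be matching the present recurrence to the exact hypotheses of \cite[Theorem 2]{Ma08}, in particular accounting for the $n$-dependent multiplier $a_n(x)=q+nx$ and the accompanying degree bookkeeping: one must confirm that each application of the operator raises the $x$-degree by exactly one and that no zero migrates to $0$ or escapes to $-\infty$. The positivity $a_n(0)=q>0$ together with the strict sign $b(x)<0$ on $(-\infty,0)$ are exactly what keep the constant terms positive and the zeros strictly interior, so the crux is checking these interface conditions rather than any delicate estimate.
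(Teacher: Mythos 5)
Your proposal is correct and follows the paper's own route exactly: the paper proves this corollary simply by combining the recurrence \eqref{Snxq-recu} with \cite[Theorem 2]{Ma08}, which is precisely what you do, with the hypothesis-checking (sign of $x(1-2x)$ on $(-\infty,0)$, base case $S_2(x,q)=q(q+x)$, nonnegativity of coefficients) spelled out in more detail than the paper bothers to give.
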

Using~\eqref{Sxzq}, it is easy to verify that
$$S(1,q;z)=\frac{1}{(1-\sin z)^q}.$$
and for $n\geq 1$,
$$S_{n}(x,-1)=\left\{
               \begin{array}{ll}
                 (1-x)(1-2x)^{m-1}, & \hbox{if $n=2m$;} \\
                 -(1-2x)^m, & \hbox{if $n=2m+1$.}
               \end{array}
             \right.$$

A cycle $(b(1),b(2),\ldots)$ is said to be up-down if
it satisfies $b(1)<b(2)>b(3)<\cdots$.
We say that a permutation $\pi$ is {\it cycle-up-down} if it is a product
of up-down cycles.
Let $\triangle_n$ be the set of cycle-up-down permutations in $\msn$.
Deutsch and Elizalde~\cite[p.~193]{Deutsch11} discovered that
$$\sum_{n\geq 0}\sum_{\pi\in\triangle_n}q^{\cyc(\pi)}\frac{z^n}{n!}=S(1,q;z).$$
Therefore, we have
$$\sum_{\pi\in\rss_n}q^{\cyc(\pi)}=\sum_{\pi\in\triangle_n}q^{\cyc(\pi)}.$$

We define $$S_n(x,y,q)=\sum_{\pi\in\rss_n}x^{\exc(\pi)}y^{\fix(\pi)}q^{\cyc(\pi)},$$
where $\fix(\pi)$ is the number of fixed points of $\pi$, i.e., $\fix(\pi)=\#\{i\in [n]: \pi(i)=i\}$.
Note that
\begin{align*}
S_{n}(x,y,q)&=\sum_{i=0}^n\binom{n}{i}(yq-q)^i\sum_{\pi\in\rss_{n-i}}x^{\exc(\pi)}q^{\cyc(\pi)}\\
            &=\sum_{i=0}^n\binom{n}{i}(yq-q)^iS_{n-i}(x,q).
\end{align*}
Using~\eqref{Sxzq}, we obtain
\begin{equation*}\label{Snxyqz-exp}
\sum_{n\geq 0}S_n(x,y,q)\frac{z^n}{n!}=e^{qz(y-1)}S(x,z)^q.
\end{equation*}
\section{Concluding remarks}
In this paper we study the peak statistics on simsum permutations.
It is well known that the descent statistic is equidistributed over $n$-simsun permutations and
$n$-Andr\'e permutations~(see~\cite{Chow11}), and there are bijections between simsun permutations and
increasing 1-2 trees~(see~\cite{Chuang12} for instance).
Therefore, one can find corresponding results on Andr\'e permutations and increasing 1-2 trees.
For example, $S(1,q;z)$ also is the (shifted) exponential generating function that counts Andr\'e permutations with
respect to the size and the number of right-to-left minima (see~\cite[Proposition~1]{Disanto14})
Furthermore, it would be interesting to derive similar results on signed simsum permutations
introduced by Ehrenborg and Readdy~\cite{Ehrenborg98}.

\end{document}